\setlist{labelindent=1pt,itemsep=.5em}
\setlist[itemize]{leftmargin=1.2cm}
\setlist[enumerate]{itemindent=0em,leftmargin=1.2cm}
\setlist[enumerate,1]{label={\upshape(\roman*)}}
\newcommand{\subjclass}[2][2020]{%
  \let\@oldtitle\@title%
  \gdef\@title{\@oldtitle\footnotetext{#1 \emph{Mathematics subject classification}: #2}}%
}
\newcommand{\keywords}[1]{%
  \let\@@oldtitle\@title%
  \gdef\@title{\@@oldtitle\footnotetext{\emph{Keywords}: #1}}%
}
\newtheorem{thm}{Theorem}[section]
\newtheorem{cor}[thm]{Corollary}
\newtheorem{prop}[thm]{Proposition}
\theoremstyle{definition}
\newtheorem{defn}[thm]{Definition}
\theoremstyle{remark}
\newtheorem{exm}[thm]{Example}
\numberwithin{equation}{section}
\DeclareMathOperator{\id}{id}
\title{Color Hom-Lie algebras, color Hom-Leibniz algebras and color omni-Hom-Lie algebras}
\author{Abdoreza Armakan$^{1}$, Sergei Silvestrov$^{2}$ \\
\small{$^{1}$Department of Mathematics, College of Sciences, Shiraz University, \authorcr
P.O. Box 71457-44776, Shiraz, Iran. \authorcr
e-mail: r.armakan@shirazu.ac.ir, reza.armakan@gmail.com \authorcr
$^{2}$ Division of Applied Mathematics,
School of Education, Culture and Communication, \authorcr
M\"{a}lardalen University, Box 883, 72123 V\"{a}steras, Sweden. \authorcr
e-mail: sergei.silvestrov@mdh.se}}
\subjclass[2020]{17B61, 17D30, 17B75}
\keywords{color Hom-Lie algebras, color omni-Hom-Lie algebra, color Hom-Leibniz algebra}
\date{}
\begin{document}

\maketitle


\abstract{Representations of color Hom-Lie algebras are reviewed, and it is shown that there exist a series of coboundary operators. We also introduce the notion of a color omni-Hom-Lie algebra associated to a vector space and an even invertible linear map. We show how regular color Hom-Lie algebra structures on a vector space can be characterized. Moreover, it is shown that the underlying algebraic structure of the color omni-Hom-Lie algebra is a color Hom-Leibniz algebra.}

\section{Introduction}

The investigations of various quantum deformations or $q$-deformations of Lie algebras began a period of rapid expansion in 1980's stimulated by introduction of quantum groups motivated by applications to the quantum Yang-Baxter equation, quantum inverse scattering methods and constructions of the quantum deformations of universal enveloping algebras of semi-simple Lie algebras. Various $q$-deformed Lie algebras have appeared in physical contexts such as string theory, vertex models in conformal field theory, quantum mechanics and quantum field theory in the context of deformations of infinite-dimensional algebras, primarily the Heisenberg algebras, oscillator algebras and Witt and Virasoro algebras. In \cite{AizawaSaito,ChaiElinPop,ChaiIsLukPopPresn,ChaiKuLuk,ChaiPopPres,CurtrZachos1,DamKu,DaskaloyannisGendefVir,Hu,Kassel92,LiuKQuantumCentExt,LiuKQCharQuantWittAlg,LiuKQPhDthesis},
it was in particular discovered that in these $q$-deformations of Witt and Visaroro algebras and some related algebras, some interesting $q$-deformations of Jacobi identities, extending Jacobi identity for Lie algebras, are satisfied. This has been one of the initial motivations for the development of general quasi-deformations and discretizations of Lie algebras of vector fields using more general $\sigma$-derivations (twisted derivations) in \cite{HLS}.

Hom-Lie algebras and more general quasi-Hom-Lie algebras were introduced first by Larsson, Hartwig and Silvestrov \cite{HLS}, where the general quasi-deformations and discretizations of Lie algebras of vector fields using more general $\sigma$-derivations (twisted derivations) and a general method for construction of deformations of Witt and Virasoro type algebras based on twisted derivations have been developed, initially motivated by the $q$-deformed Jacobi identities observed for the $q$-deformed algebras in physics, along with $q$-deformed versions of homological algebra and discrete modifications of differential calculi. Hom-Lie algebras, Hom-Lie superalgebras, Hom-Lie color algebras and more general quasi-Lie algebras and color quasi-Lie algebras where introduced first in \cite{LarssonSilv2005:QuasiLieAlg,LarssonSilv:GradedquasiLiealg,SigSilv:CzechJP2006:GradedquasiLiealgWitt}. Quasi-Lie algebras and color quasi-Lie algebras encompass within the same algebraic framework the quasi-deformations and discretizations of Lie algebras of vector fields by $\sigma$-derivations obeying twisted Leibniz rule, and the well-known generalizations of Lie algebras such as color Lie algebras, the natural generalizations of Lie algebras and Lie superalgebras. In quasi-Lie algebras, the skew-symmetry and the Jacobi identity are twisted by deforming twisting linear maps, with the Jacobi identity in quasi-Lie and quasi-Hom-Lie algebras in general containing six twisted triple bracket terms. In Hom-Lie algebras, the bilinear product satisfies the non-twisted skew-symmetry property as in Lie algebras, and the Hom-Lie algebras Jacobi identity has three terms twisted by a single linear map, reducing to the Lie algebras Jacobi identity when the twisting linear map is the identity map. Hom-Lie admissible algebras have been considered first in \cite{ms:homstructure}, where in particular the Hom-associative algebras have been introduced and shown to be Hom-Lie admissible, that is leading to Hom-Lie algebras using commutator map as new product, and in this sense constituting a natural generalization of associative algebras as Lie admissible algebras. Since the pioneering works \cite{HLS,LarssonSilvJA2005:QuasiHomLieCentExt2cocyid,LarssonSilv:GradedquasiLiealg,LarssonSilv2005:QuasiLieAlg,LarssonSilv:QuasidefSl2,ms:homstructure}, Hom-algebra structures expanded into a popular area with increasing number of publications in various directions. Hom-algebra structures of a given type include their classical counterparts and open broad possibilities for deformations, Hom-algebra extensions of cohomological structures and representations, formal deformations of Hom-associative and Hom-Lie algebras, Hom-Lie admissible Hom-coalgebras, Hom-coalgebras, Hom-Hopf algebras \cite{AmmarEjbehiMakhlouf:homdeformation,BenMakh:Hombiliform,ElchingerLundMakhSilv:BracktausigmaderivWittVir,LarssonSilvJA2005:QuasiHomLieCentExt2cocyid,LarssonSilvestrovGLTMPBSpr2009:GenNComplTwistDer,MakhSil:HomHopf,MakhSilv:HomAlgHomCoalg,MakhSilv:HomDeform,Sheng:homrep,Yau:HomolHom,Yau:EnvLieAlg}.
Hom-Lie algebras, Hom-Lie superalgebras and color Hom-Lie algebras and their $n$-ary generalizations have been further investigated in various aspects for example in \cite{AbramovSilvestrov:3homLiealgsigmaderivINvol,AmmarEjbehiMakhlouf:homdeformation,AmmarMakhloufHomLieSupAlg2010,AmmarMakhloufSaadaoui2013:CohlgHomLiesupqdefWittSup,AmmarMakhloufSilv:TernaryqVirasoroHomNambuLie,ArmakanFarhangdoost:IJGMMP,ArmakanSilv:envelalgcertaintypescolorHomLie,ArmakanSilvFarh:envelopalgcolhomLiealg,ArmakanSilvFarh:exthomLiecoloralg,
akms:ternary,ams:ternary,ArnlindMakhloufSilvnaryHomLieNambuJMP2011,Bakayoko2014:ModulescolorHomPoisson,BakayokoDialo2015:genHomalgebrastr,BakyokoSilvestrov:Homleftsymmetriccolordialgebras,BakyokoSilvestrov:MultiplicnHomLiecoloralg,BakayokoToure2019:genHomalgebrastr,
BenMakh:Hombiliform,CaoChen2012:SplitregularhomLiecoloralg,GuanChenSun:HomLieSuperalgebras,KitouniMakhloufSilvestrov,kms:solvnilpnhomlie2020,LarssonSigSilvJGLTA2008:QuasiLiedefFttN,MabroukNcibSilvestrov2020:GenDerRotaBaxterOpsnaryHomNambuSuperalgs,
ms:homstructure,MakhSilv:HomDeform,MakhSil:HomHopf,MakhSilv:HomAlgHomCoalg,Makhlouf2010:ParadigmnonassHomalgHomsuper,MandalMishra:HomGerstenhaberHomLiealgebroids,MishraSilvestrov:SpringerAAS2020HomGerstenhalgsHomLiealgds,RichardSilvestrovJA2008,RichardSilvestrovGLTbdSpringer2009,
ShengBai2014:homLiebialg,SigSilv:CzechJP2006:GradedquasiLiealgWitt,ShengChen2013:HomLie2algebras,Sheng:homrep,SigSilv:GLTbdSpringer2009,SilvestrovParadigmQLieQhomLie2007,Yau2009:HomYangBaxterHomLiequasitring,Yau:EnvLieAlg,Yau:HomolHom,Yau:HomBial,Yuan2012:HomLiecoloralgstr,ZhouChenMa:GenDerHomLiesuper}.

In Section \ref{sec:defin:homasshommodcolorhomLie} we review basic concepts of Hom-associative algebras, Hom-modules and color Hom-Lie algebras.
In Section \ref{sec:stdifgrcomcolalg}, $(\sigma,\tau)$-differential graded commutative color algebras are defined and the classical result about the relation between Lie algebra structures and differential graded commutative color algebras structures is generalized to relation between color Hom-Lie-algebras and $(\sigma,\tau)$-differential graded commutative color algebras. In Section \ref{sec:ReprescolorHomLiealg}, representations of color Hom-Lie algebras are considered, adjoint representation and its morphism interpretations are investigated, and Hom-cochains, coboundary operators and cohomological complex are described, generalizing some results in \cite{AmmarMabroukMakhloufCohomnaryHNLalg2011,Sheng:homrep}. In Section \ref{sec:colorOmniHomLiealg}, the notion of a color omni-Hom-Lie algebra associated to a vector space and an even invertible linear map is introduced, and it is shown that the color Hom-Leibniz algebras appear as underlying algebraic structure of the color omni-Hom-Lie algebras.

\section{Hom-associative algebras, Hom-modules and color Hom-Lie algebras}
\label{sec:defin:homasshommodcolorhomLie}
We start by recalling some basic concepts from \cite{MakhSilv:HomAlgHomCoalg,ms:homstructure} where also various examples and properties of Hom-associative algebras can be found. Throughout this paper, we use $\mathbf{k}$ to denote a commutative unital ring e.g. a field.
\begin{defn}
\begin{enumerate}[label=\upshape{(\roman*)},left=7pt]
\item A Hom-module is a pair $(M,\alpha)$ consisting of an $\mathbf{k}$-module $M$ and a linear operator $\alpha:M\rightarrow M$.
\item A Hom-associative algebra is a triple $(A,\cdot,\alpha)$ consisting of an $\mathbf{k}$-module $A$, a linear map $\cdot:A\otimes A\rightarrow A$ called the multiplication and a multiplicative linear operator $\alpha:A\rightarrow A$ which satisfies the Hom-associativity condition, namely
$$\alpha(x)\cdot(y\cdot z)=(x\cdot y)\cdot\alpha(z),$$
for all $x,y,z\in A$.
\item A Hom-associative algebra or a Hom-module is called involutive if $\alpha^{2}=\id$.
\item Let $(M,\alpha)$ and $(N,\beta)$ be two Hom-modules. A $\mathbf{k}$-linear map $f:M\rightarrow N$ is called a morphism of Hom-modules if
$$f(\alpha(x))=\beta(f(x)),$$
for all $x\in M$.
\item Let $(A,\cdot,\alpha)$ and $(B,\bullet,\beta)$ be two Hom-associative algebras. A $\mathbf{k}$-linear map $f:A\rightarrow B$ is called a morphism of Hom-associative algebras if
\begin{enumerate}[label=\upshape{\arabic*)},left=7pt]
\item $f(x\cdot y)=f(x)\bullet f(y),$
\item $f(\alpha(x))=\beta(f(x)),$
for all $x,y\in A$.
\end{enumerate}
\item If $(A,\cdot,\alpha)$ is a Hom-associative algebra, then $B\subseteq A$ is called a Hom-associative subalgebra of $A$ if it is closed under the multiplication $\cdot$ and $\alpha(B)\subseteq B$. A submodule $I$ is called a Hom-ideal of A if $x\cdot y \in I$ and $x\cdot y \in I$ for all $x\in I$ and $y \in A$, and also $\alpha(I)\subseteq I$.
\end{enumerate}
\end{defn}

A Hom-Lie algebra is called a {\it multiplicative} Hom-Lie algebra if $\alpha$ is an algebraic morphism, i.e. for any $x,y\in \mathfrak{g}$,
$$\alpha([x,y])=[\alpha(x),\alpha(y)].$$
We call a Hom-Lie algebra {\it regular} if $\alpha$ is an automorphism.

A sub-vector space $\mathfrak{h}\subset \mathfrak{g}$ is a Hom-Lie sub-algebra of $(\mathfrak{g},[\cdot,\cdot], \alpha)$ if $\alpha(\mathfrak{h})\subset \mathfrak{h}$ and $\mathfrak{h}$ is closed under the bracket operation, i.e.
$$[x_{1},x_{2}]_{\mathfrak{g}}\in \mathfrak{h},$$
for all $x_{1},x_{2}\in \mathfrak{h}.$
Let $(\mathfrak{g},[\cdot,\cdot], \alpha)$ be a multiplicative Hom-Lie algebra. Let $\alpha^{k}$ denote the $k$-times composition of $\alpha$ by itself, for any nonnegative integer $k$, i.e.
$$\alpha^{k}=\alpha \circ \dots \circ \alpha ~~~~~(k-times),$$
where we define $\alpha^{0}=Id$ and $\alpha^{1}=\alpha$. For a regular Hom-Lie algebra $\mathfrak{g}$, let
$$\alpha^{-k}=\alpha^{-1} \circ \dots \circ \alpha^{-1}~~~~~(k-times).$$

We now recall the notion of a color Hom-Lie algebra step by step in order to indicate them as a generalization of Lie color algebras.
\begin{defn}[\cite{BahturinMikhPetrZaicevIDLSbk92,ChenSilvestrovOystaeyen:RepsCocycleTwistsColorLie,ChenPetitOystaeyenCOHCHLA,MikhZolotykhCALSbk95,PiontkovskiSilvestrovC3dCLA,ScheunertGLA,ScheunertGTC,ScheunertCOH2,ScheunertZHA,Silvestrov:class3dimcolLiealg}]
Given a commutative group $\Gamma$ (referred to as the grading group), a commutation factor on $\Gamma$ with values in the multiplicative group $K\setminus \{0\}$ of a field $K$ of characteristic 0 is a map
$$\varepsilon: \Gamma \times \Gamma \rightarrow K\setminus \{0\},$$
satisfying three properties:
\begin{enumerate}[label=\upshape{(\roman*)},left=7pt]
  \item  $\varepsilon(\alpha+\beta,\gamma)=\varepsilon(\alpha,\gamma)\varepsilon(\beta,\gamma),$
  \item  $\varepsilon(\alpha,\gamma+\beta)=\varepsilon(\alpha,\gamma)\varepsilon(\alpha,\beta),$
  \item  $\varepsilon(\alpha,\beta)\varepsilon(\beta,\alpha)=1.$
\end{enumerate}
A $\Gamma$-graded $\varepsilon$-Lie algebra (or a Lie color algebra)
\cite{BahturinMikhPetrZaicevIDLSbk92,ChenSilvestrovOystaeyen:RepsCocycleTwistsColorLie,ChenPetitOystaeyenCOHCHLA,MikhZolotykhCALSbk95,PiontkovskiSilvestrovC3dCLA,ScheunertGLA,ScheunertGTC,ScheunertCOH2,ScheunertZHA,Silvestrov:class3dimcolLiealg}
is a $\Gamma$-graded linear space
$$X=\bigoplus_{\gamma\in \Gamma} X_{\gamma},$$
with a bilinear multiplication (bracket) $[\cdot,\cdot]:X\times X \rightarrow X$ satisfying the following properties:
\begin{enumerate}[label=\upshape{(\roman*)},left=7pt]
\item  \textbf{Grading axiom:} $[X_{\alpha}, X_{\beta}]\subseteq X_{\alpha+\beta},$
\item  \textbf{Graded skew-symmetry:} $[a,b]=-\varepsilon(\alpha,\beta)[b,a],$
\item   \textbf{Generalized Jacobi identity:}  \\$\varepsilon(\gamma,\alpha)[a,[b,c]]+ \varepsilon(\beta,\gamma)[c,[a,b]]+ \varepsilon(\alpha,\beta)[b,[c,a]]=0,$
\end{enumerate}
for all $a\in X_{\alpha}, b\in X_{\beta}, c\in X_{\gamma}$ and $\alpha,\beta,\gamma \in \Gamma$.
The elements of $X_{\gamma}$ are called homogenous of degree $\gamma$, for all $\gamma\in \Gamma$.
\end{defn}

Color Hom-Lie algebras are a special class of general color quasi-Lie algebras ($\Gamma$-graded quasi-Lie algebras) defined first in \cite{LarssonSilv2005:QuasiLieAlg,LarssonSilv:GradedquasiLiealg,SigSilv:CzechJP2006:GradedquasiLiealgWitt}.

\begin{defn}[\cite{AbdaouiAmmarMakhloufCohhomLiecolalg2015,CaoChen2012:SplitregularhomLiecoloralg,CalderonDelgado2012:splLiecolor,LarssonSilv2005:QuasiLieAlg,LarssonSilv:GradedquasiLiealg,SigSilv:CzechJP2006:GradedquasiLiealgWitt,Yuan2012:HomLiecoloralgstr}] \label{HLCAD}
A color Hom-Lie algebra is a quadruple $(\mathfrak{g},[\cdot,\cdot],\varepsilon,\alpha)$ consisting of a $\Gamma$-graded vector space $\mathfrak{g}$, a bi-character $\varepsilon$, an even bilinear mapping $$[\cdot,\cdot]:\mathfrak{g}\times \mathfrak{g}\rightarrow \mathfrak{g},$$ (i.e. $[\mathfrak{g}_{a},\mathfrak{g}_{b}]\subseteq \mathfrak{g}_{a+b}$, for all $a,b \in \Gamma$) and an even homomorphism $\alpha:\mathfrak{g}\rightarrow \mathfrak{g}$ such that for homogeneous elements $x,y,z\in \mathfrak{g}$ we have
\begin{enumerate}[label=\upshape{\arabic*)},left=7pt]
  \item  \textbf{$\varepsilon$-skew symmetry:} $[x,y]=-\varepsilon(x,y)[y,x],$
  \item  \textbf{$\varepsilon$-Hom-Jacobi identity:} $\displaystyle{\sum_{cyclic\{x,y,z\}}}\varepsilon(z,x)[\alpha(x),[y,z]]=0.$
\end{enumerate}
\end{defn}

Let $\mathfrak{g}=\bigoplus_{\gamma\in \Gamma}\mathfrak{g}_{\gamma}$ and $\mathfrak{h}=\bigoplus_{\gamma\in \Gamma}\mathfrak{h}_{\gamma}$ be two $\Gamma$-graded color Lie algebras. A linear mapping $f:\mathfrak{g}\rightarrow \mathfrak{h}$ is said to be homogenous of the degree $\mu\in \Gamma$ if
$$f(\mathfrak{g}_{\gamma})\subseteq \mathfrak{h}_{\gamma+\mu},$$
for all $\gamma\in \Gamma$. If in addition, $f$ is homogenous of degree zero, i. e. $$f(\mathfrak{g}_{\gamma})\subseteq \mathfrak{h}_{\gamma},$$ holds for any $\gamma\in \Gamma$, then $f$ is said to be even.

Let $(\mathfrak{g},[\cdot,\cdot],\varepsilon,\alpha)$ and $(\mathfrak{g}',[\cdot,\cdot]',\varepsilon',\alpha')$ be two color Hom-Lie algebras. A homomorphism of degree zero $f:\mathfrak{g}\rightarrow \mathfrak{g}'$ is called a morphism of color Hom-Lie algebras if
\begin{enumerate}[label=\upshape{\arabic*)},left=7pt]
  \item  $[f(x),f(y)]'=f([x,y])$, for all $x,y \in \mathfrak{g},$
  \item  $f \circ \alpha=\alpha' \circ f.$
\end{enumerate}

In particular, if $\alpha$ is a morphism of color Lie algebras, then we call $(\mathfrak{g},[\cdot,\cdot],\varepsilon,\alpha)$, a multiplicative color Hom-Lie algebra.
\begin{exm}[\cite{AbdaouiAmmarMakhloufCohhomLiecolalg2015}]
As in case of Hom-associative and Hom-Lie algebras,
examples of multiplicative color Hom-Lie algebras can be constructed for example by the standard method of composing multiplication with algebra morphism.
Let $(\mathfrak{g},[\cdot,\cdot],\varepsilon)$ be a color Lie algebra and $\alpha$ be a color Lie algebra morphism. Then $(\mathfrak{g},[\cdot,\cdot]_{\alpha}:=\alpha\circ [\cdot,\cdot],\varepsilon,\alpha)$ is a multiplicative color Hom-Lie algebra.
\end{exm}
As for an associative algebra and a Lie algebra, a Hom-associative color algebra $(V,\mu,\alpha)$ gives a color Hom-Lie algebra by antisymmetrization. We denote this color Hom-Lie algebra by $(A,[\cdot,\cdot]_{A},\beta_{A})$, where $\beta_{A}=\alpha$ and $[x,y]_{A}=xy-yx$, for all $x,y\in A$.

\section{$(\sigma,\tau)$-differential graded commutative color algebra}
\label{sec:stdifgrcomcolalg}
\begin{defn}
Let $A$ be an associative algebra, and let $\sigma$, $\tau$ denote two algebra endomorphisms on $A$. A $(\sigma,\tau)$-derivation on $A$ is a linear map $D:A\rightarrow A$ such that
\begin{equation}\label{deriv}
  D(ab)=D(a)\tau (b)+\sigma (a)D(b),
\end{equation}
for all $a,b\in A$. A $\sigma$-derivation on $A$ is a $(\sigma,\id)$-derivation.
\end{defn}

In \cite{HLS}, Hom-Lie algebra or more general quasi Hom-Lie structures have been shown to arise in fundamental ways for $\sigma$-derivations on associative algebras. We define $(\sigma,\tau)$-differential graded commutative color algebras as follows.
\begin{defn}\label{FIHAA}
A $(\sigma,\tau)$-differential graded commutative color algebra $(\mathcal{A},\sigma,\tau,d_{\mathcal{A}})$ consists of a $\Gamma$-graded commutative algebra $\mathcal{A}$, two algebra endomorphisms $\sigma$ and $\tau$ of degree zero and an operator $d_{\mathcal{A}}$ of degree $p$ which satisfy the following conditions:
\begin{enumerate}[label=\upshape{(\roman*)},left=7pt]
  \item $d_{\mathcal{A}}^{2}=0$;
  \item $d_{\mathcal{A}}$ commutes with $\sigma$ and $\tau$;
  \item $d_{\mathcal{A}}(ab)= d_{\mathcal{A}}(a)\tau(b)+ \varepsilon(a,b)\sigma(a)d_{\mathcal{A}}(b)$, for homogeneous $a, b \in \mathcal{A}$.
\end{enumerate}
\end{defn}

For a $\Gamma$-graded vector space $\mathfrak{g}$, denote by $\bigwedge \mathfrak{g}*=\sum_{k}\bigwedge^{k}\mathfrak{g}^{*}$, $k\in \Gamma$ the exterior algebra of $\mathfrak{g}*$. For an endomorphism $\beta$, its dual map $\beta^{*}$ naturally extends to an algebra morphism,
$$(\beta^{*}\xi)(x_{1},\cdots,x_{k})= \xi(\beta(x_{1}),\cdots,\beta(x_{k})),$$
for all $\xi\in \bigwedge\mathfrak{g}^{*}$. Let
\begin{align*}
d\xi(x_{0},\dots,x_{p})= \sum_{i<j}(-1)^{i+j} \theta_{ij}(x) & \psi(\alpha(x_{0}),\dots,\alpha(x_{i-1}),\\
&[x_{i},x_{j}],\alpha(x_{i-1}),\dots,\widehat{x_{j}},\dots,\alpha(x_{p})).
\end{align*}

\begin{prop} \label{prop:dprops} The following properties hold:
\begin{enumerate}[label=\upshape{(\roman*)},left=7pt]
  \item \label{prop:dprops:i} $d^{2}=0,$
  \item \label{prop:dprops:ii} $\alpha^{*}\circ d=d \circ \alpha^{*},$
  \item \label{prop:dprops:iii} $d(\xi \wedge \eta)=d\xi \wedge \alpha^{*}\eta + \varepsilon(k,l) \alpha^{*}\xi\wedge d\eta$, for all $\xi\in \wedge^{k}\mathfrak{g}^{*}$ and $\eta\in \wedge^{l}\mathfrak{g}^{*}$.
\end{enumerate}
\end{prop}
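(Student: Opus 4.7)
The plan is to handle (ii) and (iii) by a direct expansion of the defining formula for $d$, and then to deduce (i) from the $\varepsilon$-Hom-Jacobi identity of Definition \ref{HLCAD} via a standard double-sum computation.

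For (ii), I would evaluate both $\alpha^{*}(d\xi)$ and $d(\alpha^{*}\xi)$ on an arbitrary tuple $(x_{0},\dots,x_{p})$. The crucial input is multiplicativity $\alpha([x,y])=[\alpha(x),\alpha(y)]$ of the color Hom-Lie algebra, which lets me absorb the outer $\alpha$'s coming from $\alpha^{*}$ into the brackets and $\alpha$'s already present inside the defining formula for $d$. Since the sign coefficients $\theta_{ij}$ depend only on the $\Gamma$-degrees of the $x_{i}$, and $\alpha$ is even, the resulting expressions on the two sides agree term by term.

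For (iii), I would write $(\xi\wedge\eta)(x_{0},\dots,x_{k+l-1})$ as the usual signed shuffle sum and then apply the definition of $d$. The pairs $i<j$ split into three classes: both indices inside the $\xi$-block, both inside the $\eta$-block, or a straddling pair. The first two classes should repackage, after using that every non-bracketed argument is hit by an $\alpha$, into $d\xi\wedge\alpha^{*}\eta$ and $\varepsilon(k,l)\,\alpha^{*}\xi\wedge d\eta$ respectively, with the factor $\varepsilon(k,l)$ arising from having to commute the bracket slot past the $k$ arguments of $\xi$. The straddling terms, where $[x_{i},x_{j}]$ mixes $\xi$- and $\eta$-arguments, should cancel in pairs by a combined use of the $\varepsilon$-skew symmetry of the bracket and the antisymmetrization built into the wedge product.

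For (i), I would compute $d^{2}\xi(x_{0},\dots,x_{p+1})$ as a double sum indexed by two pairs $\{i_{1},j_{1}\}$ and $\{i_{2},j_{2}\}$. Pairs with empty intersection produce \emph{unnested} terms containing two separate brackets; these cancel pairwise when the two bracket choices are interchanged, once one verifies that the combined $(-1)^{\cdot}$ and $\varepsilon$-factors change sign under the swap. Pairs with non-empty intersection are indexed by triples $\{i,j,k\}$ and contribute, for each triple, three summands of the form $\varepsilon(\cdot,\cdot)\,\xi(\dots,[\alpha(x_{i}),[x_{j},x_{k}]],\dots)$ in cyclic permutation of $(i,j,k)$; these sum to zero by the $\varepsilon$-Hom-Jacobi identity. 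The placement of $\alpha$ on every non-bracketed argument in the first application of $d$ is precisely what converts the inner bracket after the second application of $d$ into $[\alpha(x_{i}),[x_{j},x_{k}]]$, matching the Hom-Jacobi identity exactly. The main obstacle is the sign bookkeeping concentrated in the straddling cancellation in (iii) and the unnested cancellation in (i): in both places one must track carefully how the shuffle signs interact with the color factor $\varepsilon$, and a consistent convention for the symbols $\theta_{ij}$ has to be fixed at the outset so that the rearrangements produce matched cancelling signs.
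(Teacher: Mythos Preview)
Your treatment of (ii) is essentially identical to the paper's direct computation. For (i) the paper does not give a proof at all but simply cites \cite{AbdaouiAmmarMakhloufCohhomLiecolalg2015,AmmarEjbehiMakhlouf:homdeformation}; your double-sum argument (disjoint bracket pairs cancel under a swap, overlapping triples vanish by the $\varepsilon$-Hom-Jacobi identity) is the standard one and actually supplies what the paper omits. The genuine methodological difference is in (iii): the paper argues by \emph{induction on $k$}, checking the case $k=1$ by a direct term-by-term expansion of $d(\xi\wedge\eta)(x_1,\dots,x_{l+2})$ and then reducing $k=n+1$ to $k=n$ and $k=1$ by writing a degree-$(n+1)$ form as $\xi\wedge\omega$ with $\omega\in\mathfrak{g}^*$ and using that $\alpha^*$ is a wedge-algebra morphism. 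This keeps all sign-and-$\varepsilon$ bookkeeping confined to the one-dimensional base case.

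Your direct shuffle expansion can also be made to work, but the ``straddling'' step as you describe it does not occur. If one applies $d$ first and then expands $(\xi\wedge\eta)\big([x_i,x_j],\alpha(x_0),\dots\big)$ over $(k,l)$-shuffles, the bracket $[x_i,x_j]$ is a \emph{single} argument and is sent in its entirety either to $\xi$ or to $\eta$; there is no term with $x_i$ feeding one factor and $x_j$ the other, hence nothing to cancel. Every left-hand term is matched bijectively with a term of $d\xi\wedge\alpha^*\eta$ or of $\varepsilon(k,l)\,\alpha^*\xi\wedge d\eta$, and the work is purely in verifying that the combined ordinary signs and $\varepsilon$-factors agree. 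So your plan for (iii) is sound once the ``three classes'' are collapsed to two; just be aware that the paper's inductive route sidesteps this global combinatorics entirely.
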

\begin{proof}
\begin{enumerate}[label=\upshape{(\roman*)},left=7pt]
\item The proof for \ref{prop:dprops:i} can be found in \cite{AbdaouiAmmarMakhloufCohhomLiecolalg2015,AmmarEjbehiMakhlouf:homdeformation}.
\item  Let $\xi\in \wedge^{k}\mathfrak{g}^{*}$. We have
  \begin{align*}
    \alpha^{*} & \circ d \xi (x_{1},\cdots,x_{k+1})= d \xi (\alpha(x_{1}),\cdots,\alpha(x_{k+1})) \\
    = & \sum_{i<j} \varepsilon(i,j) \varepsilon(|x_{1}|+\cdots+|x_{i-1}|,|x_{i}|) \varepsilon(|x_{1}|+\cdots+|x_{j-1}|,|x_{j}|) \varepsilon(|x_{i}|,|x_{j}|) \\
     & \xi ([\alpha(x_{i}),\alpha(x_{j})], \alpha^{2}(x_{1}), \cdots, \widehat{x_{i}}, \cdots, \widehat{x_{j}}, \cdots, \alpha^{2}(x_{k+1})) \\
    = & \sum_{i<j} \varepsilon(i,j) \varepsilon(|x_{1}|+\cdots+|x_{i-1}|,|x_{i}|) \varepsilon(|x_{1}|+\cdots+|x_{j-1}|,|x_{j}|) \varepsilon(|x_{i}|,|x_{j}|) \\
     & \alpha^{*} \xi ([x_{i},x_{j}], \alpha(x_{1}), \cdots, \widehat{x_{i}}, \cdots, \widehat{x_{j}}, \cdots, \alpha(x_{k+1})) \\
    = & d(\alpha^{*} \xi)(x_{1},\cdots,x_{k+1}).
  \end{align*}
  \item  We use induction on $k$. If $k=1$, then $\xi\wedge \eta\in \wedge^{1+l}\mathfrak{g}^{*}$ and
      \begin{align*}
        d( \xi \wedge \eta)&(x_{1},\cdots,x_{l+2}) \\
        = & \sum_{i<j} \varepsilon(i,j) \varepsilon(|x_{1}|+\cdots+|x_{i-1}|,|x_{i}|) \varepsilon(|x_{1}|+\cdots+|x_{j-1}|,|x_{j}|) \varepsilon(|x_{i}|,|x_{j}|) \\
         & \hspace{1cm} \xi\wedge \eta ([x_{i},x_{j}], \alpha(x_{1}), \cdots, \widehat{x_{i}}, \cdots, \widehat{x_{j}}, \cdots, \alpha(x_{l+2})) \\
        = & \sum_{i<j} \varepsilon(i,j) \varepsilon(|x_{1}|+\cdots+|x_{i-1}|,|x_{i}|) \varepsilon(|x_{1}|+\cdots+|x_{j-1}|,|x_{j}|) \varepsilon(|x_{i}|,|x_{j}|) \\
         & \hspace{1cm} \{\xi([x_{i},x_{j}]) \eta( \alpha(x_{1}), \cdots, \widehat{x_{i}}, \cdots, \widehat{x_{j}}, \cdots, \alpha(x_{l+2})) \\
        &+ \sum_{p<i} (-1)^{p} \varepsilon(|x_{i}|+|x_{j}|+|x_{1}|+\cdots+|x_{p-1}|,|x_{p}|) \\
         & \hspace{1cm} \xi(\alpha(x_{p})) \eta([x_{i},x_{j}], \alpha(x_{1}),\cdots, \widehat{x_{p}}, \cdots, \widehat{x_{i}}, \cdots, \widehat{x_{j}}, \cdots, \alpha(x_{l+2})) \\
        &+ \sum_{i<p<j} (-1)^{p-1} \varepsilon(|x_{j}|+|x_{1}|+\cdots+|x_{p-1}|,|x_{p}|) \\
         & \hspace{1cm} \xi(\alpha(x_{p})) \eta([x_{i},x_{j}], \alpha(x_{1}),\cdots, \widehat{x_{p}}, \cdots, \widehat{x_{i}}, \cdots, \widehat{x_{j}}, \cdots, \alpha(x_{l+2})) \\
        &+ \sum_{j<p} (-1)^{p-2} \varepsilon(|x_{1}|+\cdots+|x_{p-1}|,|x_{p}|) \\
         & \hspace{1cm} \xi(\alpha(x_{p})) \eta([x_{i},x_{j}], \alpha(x_{1}),\cdots, \widehat{x_{p}}, \cdots, \widehat{x_{i}}, \cdots, \widehat{x_{j}}, \cdots, \alpha(x_{l+2}))\} \\
        = & d\xi \wedge \alpha^{*} \eta(x_{1},\cdots,x_{l+2}) -\alpha^{*}\xi\wedge d\eta(x_{1},\cdots,x_{l+2}).
      \end{align*}
Therefore, when $k=1$,
$$d(\xi\wedge \eta)=d\xi\wedge \alpha^{*}\eta + \varepsilon(1,l)\alpha^{*}\xi \wedge d\eta.$$
Suppose that for $k=n$,
$$d(\xi\wedge \eta)=d\xi\wedge \alpha^{*}\eta + \varepsilon(n,l)\alpha^{*}\xi \wedge d\eta.$$
Let $\omega \in \mathfrak{g}^{*}$. We have $\xi\wedge\omega\in  \wedge^{n+l} \mathfrak{g}^{*} $ and
\begin{align*}
  d(\xi\wedge\omega\wedge \eta )= & d\xi\wedge \alpha^{*} (\omega\wedge\eta)+\varepsilon(n,l)\alpha^{*}\xi\wedge d(\omega\wedge \eta)\\
  = & d\xi\wedge (\alpha^{*}\omega\wedge\alpha^{*}\eta)+\varepsilon(n,l) \alpha^{*}\xi\wedge (d\omega \wedge \alpha^{*}\eta +\varepsilon(1,l)\alpha^{*}\omega\wedge d\eta ) \\
  = & (d\xi\wedge\alpha^{*}\omega+ \varepsilon(n,l) \alpha^{*}\xi\wedge d\omega)\wedge\alpha^{*}\eta +\varepsilon(n+1,l) (\alpha^{*}\xi\wedge \alpha^{*}\omega) d\eta  \\
  = & d(\xi\wedge \omega)\wedge \alpha^{*}\eta +\varepsilon(n+1,l)\alpha^{*}\xi\wedge\omega)\wedge d\eta),
\end{align*}
which completes the proof.
\end{enumerate}
\end{proof}

Part \ref{prop:dprops:iii} of the above proposition says that  $(\wedge\mathfrak{g}^{*},\alpha^{*},d)$ is an $(\alpha^{*},\alpha^{*})$-differential graded commutative algebra. The converse of the above conclusion is also true. Thus, we have the following theorem, which generalizes the classical result about the relation between Lie algebra structures and DGCA structures.

\begin{thm}
  Triple $(\mathfrak{g},[\cdot,\cdot],\alpha)$ is a color Hom-Lie algebra if and only if $(\wedge\mathfrak{g}^{*},\alpha^{*},d)$ is an $(\alpha^{*},\alpha^{*})$-differential graded commutative color algebra.
\end{thm}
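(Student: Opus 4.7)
The forward (``only if'') direction requires no essentially new computation: assuming $\mathfrak{g}$ is a color Hom-Lie algebra, parts \ref{prop:dprops:i}, \ref{prop:dprops:ii} and \ref{prop:dprops:iii} of Proposition~\ref{prop:dprops} verify, respectively, the three axioms of Definition~\ref{FIHAA} for the data $(\wedge\mathfrak{g}^{*},\alpha^{*},\alpha^{*},d)$, and the exterior algebra $\wedge\mathfrak{g}^{*}$ is $\Gamma$-graded $\varepsilon$-commutative by construction. So the substance of the theorem lies in the converse.

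For the converse, assume $(\wedge\mathfrak{g}^{*},\alpha^{*},d)$ is an $(\alpha^{*},\alpha^{*})$-differential graded commutative color algebra, with $d$ defined by the displayed formula in terms of the bracket $[\cdot,\cdot]$ and the endomorphism $\alpha$ already present on $\mathfrak{g}$. To recover the $\varepsilon$-skew symmetry, I would restrict $d$ to a $1$-form $\xi\in\mathfrak{g}^{*}$: the formula collapses to $d\xi(x,y)=-\xi([x,y])$ (up to an overall factor that is trivial in the two-argument case) for homogeneous $x,y\in\mathfrak{g}$. Since by hypothesis $d\xi\in\wedge^{2}\mathfrak{g}^{*}$, it is $\varepsilon$-antisymmetric in its arguments, forcing
$$\xi([x,y])=-\varepsilon(|x|,|y|)\,\xi([y,x])$$
for every $\xi\in\mathfrak{g}^{*}$. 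Non-degeneracy of the natural pairing then yields $[x,y]=-\varepsilon(|x|,|y|)[y,x]$, which is the $\varepsilon$-skew symmetry of Definition~\ref{HLCAD}.

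To recover the $\varepsilon$-Hom-Jacobi identity, I would apply $d^{2}=0$ to an arbitrary $\xi\in\mathfrak{g}^{*}$ and evaluate on three homogeneous elements $x,y,z\in\mathfrak{g}$. Expanding $d(d\xi)(x,y,z)$ with the formula for $d$ used twice produces a linear combination of terms of the shape $\xi([\alpha(u),[v,w]])$ and $\xi([[u,v],\alpha(w)])$, where $(u,v,w)$ runs over permutations of $(x,y,z)$. Using the skew symmetry just established, terms of the second shape can be rewritten in the first, and after collecting, the equation $d^{2}\xi(x,y,z)=0$ reads
$$\xi\Bigl(\sum_{\mathrm{cyclic}\{x,y,z\}}\varepsilon(|z|,|x|)\,[\alpha(x),[y,z]]\Bigr)=0.$$
Since $\xi$ is arbitrary, the $\varepsilon$-Hom-Jacobi identity follows. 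The remaining two DGCCA axioms (compatibility of $d$ with $\alpha^{*}$ and the twisted Leibniz rule) hold for formal reasons once $d$ is defined by the formula and impose no additional conditions on $[\cdot,\cdot]$ or $\alpha$.

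The main obstacle is the combinatorial bookkeeping in the $d^{2}=0$ expansion: carrying the signs $(-1)^{i+j}$ together with the commutation factors $\theta_{ij}$ through two successive applications of $d$, and verifying that every term not of the cyclic shape above cancels pairwise, so that precisely the three cyclic contributions survive with the correct $\varepsilon(|z|,|x|)$-style coefficients. This is the color-graded Hom-twisted analogue of the classical equivalence of $d^{2}=0$ on $1$-forms with the Jacobi identity; the strategy is clear, but the $\varepsilon$-factor chase is where all the care must be concentrated.
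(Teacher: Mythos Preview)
Your overall strategy---citing Proposition~\ref{prop:dprops} for the forward direction and extracting skew-symmetry from $d\xi\in\wedge^2\mathfrak{g}^*$ together with the $\varepsilon$-Hom-Jacobi identity from $d^2\xi=0$ on $1$-forms---matches the paper's. There is, however, one genuine gap.

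You assert that the compatibility axiom $d\circ\alpha^{*}=\alpha^{*}\circ d$ ``hold[s] for formal reasons once $d$ is defined by the formula and impose[s] no additional conditions on $[\cdot,\cdot]$ or $\alpha$.'' This is not correct: that axiom is exactly what forces $\alpha$ to be multiplicative with respect to the bracket. The paper uses it explicitly. Evaluating $d(\alpha^{*}\eta)(x_{1},x_{2})=-\langle\eta,\alpha[x_{1},x_{2}]\rangle$ and $\alpha^{*}(d\eta)(x_{1},x_{2})=-\langle\eta,[\alpha(x_{1}),\alpha(x_{2})]\rangle$ and equating them yields $\alpha([x_{1},x_{2}])=[\alpha(x_{1}),\alpha(x_{2})]$, which is the ``even homomorphism'' requirement in Definition~\ref{HLCAD}. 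The formula defining $d$ does not build this in automatically; indeed, the forward proof of Proposition~\ref{prop:dprops}\ref{prop:dprops:ii} already relies on multiplicativity of $\alpha$, so in the converse direction the commutation of $d$ with $\alpha^{*}$ is a genuine hypothesis that must be unpacked, not dismissed. Add this step and your argument is complete and aligned with the paper's.
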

\begin{proof}
Suppose that $(\wedge\mathfrak{g}^{*},\alpha^{*},d)$ is an $(\alpha^{*},\alpha^{*})$-differential graded commutative color algebra. One can define a skewsupersymmetric bracket $[\cdot,\cdot]:\wedge^{2}\mathfrak{g}\rightarrow \mathfrak{g}$ by
$$\langle\eta,[x_{1},x_{2}]\rangle= -d\eta(x_{1},x_{2}),$$
for all $\eta \in \mathfrak{g}^{*}$, $x_{1},x_{2}\in \mathfrak{g}$.
We have
\begin{eqnarray*} d(\alpha^{*}\eta)(x_{1},x_{2}) &=& -\langle\alpha^{*} \eta ,[x_{1},x_{2}]\rangle=-\langle\eta,\alpha[x_{1},x_{2}]\rangle, \\ \alpha^{*}d\eta(x_{1},x_{2}) &=& -\langle \eta,[\alpha(x_{1}),\alpha(x_{2})]\rangle.
\end{eqnarray*}
Moreover,
$\alpha([x_{1},x_{2}])=[\alpha(x_{1}),\alpha(x_{2})],$
which implies that $\alpha$ is an algebra endomorphism.
On the other hand, for $\xi\in \mathfrak{g}^{*}$, $x_{1},x_{2},x_{3}\in \mathfrak{g}$ we have
\begin{align*}
  &0=d (d\xi)(x_{1},x_{2},x_{3})= \varepsilon(1,2)d\xi([x_{1},x_{2}],\alpha(x_{3})) \\
  & \hspace{2cm} + \varepsilon(1,3)\varepsilon(|x_{1}|+|x_{2}|,|x_{3}|) \varepsilon(|x_{1}|,|x_{3}|) d\xi([x_{1},x_{3}],\alpha(x_{2}))\\
  & \hspace{3cm} + \varepsilon(2,3)\varepsilon(|x_{1}|,|x_{2}|) \varepsilon(|x_{1}|+|x_{2}|,|x_{3}|) \varepsilon(|x_{2}|,|x_{3}|) d\xi([x_{2},x_{3}],\alpha(x_{1}))\\
  &= \varepsilon(1,2)d\xi([x_{1},x_{2}],\alpha(x_{3}))+ \varepsilon(|x_{2}|,|x_{3}|)d\xi([x_{1},x_{3}],\alpha(x_{2}))\\
  & \hspace{4cm} +  \varepsilon(|x_{2}|+|x_{3}|,|x_{1}|) d\xi([x_{2},x_{3}],\alpha(x_{1})) \\
  &= \xi(([x_{1},x_{2}],\alpha(x_{3})) +\varepsilon(|x_{2}|,|x_{3}|)([x_{1},x_{3}],\alpha(x_{2})) \\
  & \hspace{3cm} +  \varepsilon(|x_{2}|+|x_{3}|,|x_{1}|) \xi([x_{2},x_{3}],\alpha(x_{1})),
\end{align*}
which implies that
\begin{equation*}
([x_{1},x_{2}],\alpha(x_{3}))+
\varepsilon(|x_{2}|,|x_{3}|)  ([x_{1},x_{3}],\alpha(x_{2}))+ \varepsilon(|x_{2}|+|x_{3}|,|x_{1}|)([x_{2},x_{3}],\alpha(x_{1}))=0,
\end{equation*}
This completes the proof.
\end{proof}

\section{Representations of color Hom-Lie algebras}
\label{sec:ReprescolorHomLiealg}
We are going to generalize some results from \cite{AmmarMabroukMakhloufCohomnaryHNLalg2011,Sheng:homrep}.
\begin{defn}
Let $(\mathfrak{g},[\cdot,\cdot],\varepsilon,\alpha)$ be a color Hom-Lie algebra. A representation of $\mathfrak{g}$ is a triplet $(M,\rho,\beta)$, where $M$ is a $\Gamma$-graded vector space, $\beta\in End(M)_{0}$ and $\rho:\mathfrak{g}\rightarrow End(M)$ is an even linear map satisfying
\begin{equation}
  \rho([x,y])\circ \beta=\rho(\alpha(x))\circ \rho(y)- \varepsilon(x,y)\rho(\alpha(y))\circ \rho(x),
\end{equation}
for all homogeneous elements $x,y \in \mathfrak{g}$.
\end{defn}
Let $\mathfrak{g}$ be a $\Gamma$-graded vector space and let $\beta\in \mathfrak{gl}(\mathfrak{g})_{\bar{0}}$.
For any homogenous elements $x,y\in \mathfrak{gl}(\mathfrak{g})$, define
$[\cdot,\cdot]_{\beta}:\mathfrak{gl}(\mathfrak{g}) \times \mathfrak{gl}(\mathfrak{g}) \rightarrow \mathfrak{gl}(\mathfrak{g}),$
by
$$[x,y]_{\beta}=\beta x\beta^{-1}y\beta^{-1}-\varepsilon(x,y)\beta y\beta^{-1} x \beta^{-1},$$
Recall from \cite{AbdaouiAmmarMakhloufCohhomLiecolalg2015}, the adjoint action on $\mathfrak{gl}(\mathfrak{g})$:
$$Ad_{\beta}(x)=\beta x\beta^{-1},$$
for an element $x$ which satisfies $\alpha(x)=x$.

It is shown in \cite{AbdaouiAmmarMakhloufCohhomLiecolalg2015,AmmarEjbehiMakhlouf:homdeformation} that $(\mathfrak{g},ad_{k},\alpha)$ is a representation of $\mathfrak{g}$ which is called the adjoint representation of the color Hom-Lie algebra $\mathfrak{g}$.

\begin{prop}
  Let $\mathfrak{g}$ and $[\cdot,\cdot]_{\beta}$ be as described above. Then $(\mathfrak{gl}(\mathfrak{g}),[\cdot,\cdot]_{\beta}, Ad_{\beta} )$ is a regular color hom Lie algebra.
\end{prop}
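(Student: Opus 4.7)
The plan is to realize $[\cdot,\cdot]_{\beta}$ as the $Ad_{\beta}$-twist of a color Lie bracket coming from a suitable associative multiplication on $\mathfrak{gl}(\mathfrak{g})$, and then to invoke the standard twisting construction recalled in the example following Definition \ref{HLCAD}. On the underlying $\Gamma$-graded vector space $\mathfrak{gl}(\mathfrak{g})$, I first introduce the new multiplication
$$x\ast y := x\,\beta^{-1}\,y.$$
This is associative by the direct identity $(x\ast y)\ast z = x\beta^{-1}y\beta^{-1}z = x\ast(y\ast z)$, and since $\beta$ (hence $\beta^{-1}$) is even, $\ast$ preserves the $\Gamma$-grading. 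Therefore $(\mathfrak{gl}(\mathfrak{g}),\ast,\varepsilon)$ is a $\Gamma$-graded associative color algebra, and its graded commutator
$$[x,y]_{\ast} := x\ast y - \varepsilon(x,y)\, y\ast x \;=\; x\beta^{-1}y - \varepsilon(x,y)\, y\beta^{-1}x$$
defines a color Lie bracket on $\mathfrak{gl}(\mathfrak{g})$.

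Next, I would verify that $Ad_{\beta}$ is an algebra endomorphism of $(\mathfrak{gl}(\mathfrak{g}),\ast)$. Using $\beta^{-1}\beta = \id$, a direct calculation gives
$$Ad_{\beta}(x)\ast Ad_{\beta}(y) \;=\; (\beta x\beta^{-1})\,\beta^{-1}\,(\beta y\beta^{-1}) \;=\; \beta x\beta^{-1}y\beta^{-1} \;=\; Ad_{\beta}(x\ast y),$$
so $Ad_{\beta}$ is in particular a morphism of the color Lie algebra $(\mathfrak{gl}(\mathfrak{g}),[\cdot,\cdot]_{\ast},\varepsilon)$. Expanding,
$$Ad_{\beta}([x,y]_{\ast}) \;=\; \beta x\beta^{-1}y\beta^{-1} - \varepsilon(x,y)\,\beta y\beta^{-1}x\beta^{-1} \;=\; [x,y]_{\beta},$$
so $[\cdot,\cdot]_{\beta} = Ad_{\beta}\circ[\cdot,\cdot]_{\ast}$.

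Applying the construction from the example following Definition \ref{HLCAD} — a color Lie algebra composed with a color Lie morphism yields a multiplicative color Hom-Lie algebra — gives the multiplicative color Hom-Lie algebra $(\mathfrak{gl}(\mathfrak{g}),[\cdot,\cdot]_{\beta},\varepsilon,Ad_{\beta})$. Regularity is then automatic: invertibility of $\beta$ gives $Ad_{\beta}$ the two-sided inverse $Ad_{\beta^{-1}}$, so it is an automorphism. I expect the only mild obstacle to be bookkeeping of bicharacter signs; the virtue of factoring through $[\cdot,\cdot]_{\ast}$ is that the $\varepsilon$-Hom-Jacobi identity for $[\cdot,\cdot]_{\beta}$ is never expanded by hand, but is inherited via $Ad_{\beta}^{2}$ from the ordinary $\varepsilon$-Jacobi identity of the associative-derived bracket $[\cdot,\cdot]_{\ast}$.
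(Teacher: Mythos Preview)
Your argument is correct and follows a genuinely different route from the paper. The paper proceeds by direct verification: it first notes that $Ad_{\beta}\circ Ad_{\beta^{-1}}=\id$, then checks multiplicativity $[Ad_{\beta}(x),Ad_{\beta}(y)]_{\beta}=Ad_{\beta}([x,y]_{\beta})$ by expansion, and finally expands the cyclic sum $\sum_{cyclic}\varepsilon(x,z)[[x,y]_{\beta},Ad_{\beta}(z)]_{\beta}$ into monomials in $\beta,\beta^{-1}$ and verifies that the terms cancel in pairs. You instead recognise $[\cdot,\cdot]_{\beta}$ as the Yau twist $Ad_{\beta}\circ[\cdot,\cdot]_{\ast}$ of the graded commutator of the associative color product $x\ast y=x\beta^{-1}y$, after checking that $Ad_{\beta}$ is a $\ast$-algebra (hence $[\cdot,\cdot]_{\ast}$-Lie) morphism; the $\varepsilon$-Hom-Jacobi identity is then inherited from the twisting example rather than computed by hand. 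Your approach is more conceptual and explains structurally why the construction works, while the paper's brute-force check is fully self-contained and does not appeal to any auxiliary product or cited lemma.
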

\begin{proof}
  One can easily see that $Ad_{\beta}$ is invertible, since
  $Ad_{\beta} \circ Ad_{\beta^{-1}}=\id$. Moreover,
  \begin{align*}
    [Ad_{\beta}(x) & Ad_{\beta}(y)]_{\beta}=[\beta x \beta^{-1}, \beta y \beta^{-1}]_{\beta} \\
    = & \beta^{2}x \beta^{-1}y\beta^{-1}\beta^{-1} -\varepsilon(x,y) \beta^{2} y \beta^{-1} x \beta^{-1}\beta^{-1}= Ad_{\beta}([x,y]_{\beta}),
  \end{align*}
  for all $x,y \in \mathfrak{gl}(\mathfrak{g})$. Furthermore,
  \begin{align*}
  \displaystyle{\sum_{cyclic\{x,y,z\}}} & \varepsilon(x,z) [[x,y]_{\beta},Ad_{\beta}(z)]_{\beta}  \\
    = &  \displaystyle{\sum_{cyclic\{x,y,z\}}} (\varepsilon(x,z)([\beta x \beta^{-1},\beta y \beta^{-1}z\beta^{-1}]_{\beta})- [\beta x \beta^{-1}, \varepsilon (z,y)\beta z \beta^{-1}y \beta^{-1}]_{\beta})  \\
    = &  \displaystyle{\sum_{cyclic\{x,y,z\}}} (\varepsilon(x,z) \beta^{2}x \beta^{-1}y \beta^{-1}z \beta^{-1}\beta^{-1}- \varepsilon(z,y+x)\beta^{2}x \beta^{-1}z\beta^{-1}y \beta^{-1}\beta^{-1} \\
     & -\varepsilon(x,y)\beta^{2}y \beta^{-1}z \beta^{-1}x \beta^{-1}\beta^{-1}
      +\varepsilon(y,x+y) \beta^{2}z\beta^{-1} y \beta^{-1} x \beta^{-1}\beta^{-1}) =  0.
  \end{align*}
This completes the proof.
\end{proof}

\begin{thm}
  Let $(\mathfrak{g},[\cdot,\cdot],\alpha)$ be a color Hom-Lie algebra, $V$ a $\Gamma$-graded vector space and $\beta \in \mathfrak{gl}(V)_{\bar{0}}$. Then $\rho:\mathfrak{g}\rightarrow \mathfrak{gl}(V) $ is a representation of $(\mathfrak{g},[\cdot,\cdot],\alpha)$ on $V$ with respect to $\beta$ if and only if $\rho:(\mathfrak{g},[\cdot,\cdot],\alpha) \rightarrow (\mathfrak{gl}(V),[\cdot,\cdot]_{\beta},Ad_{\beta}) $ is a morphism of color Hom-Lie algebras.
\end{thm}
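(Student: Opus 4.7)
The plan is to expand both sides of the biconditional into explicit algebraic identities and then recognize that they coincide. Throughout, I rely on $\beta$ being invertible, which the preceding proposition implicitly uses in forming $(\mathfrak{gl}(V),[\cdot,\cdot]_{\beta},Ad_{\beta})$ as a regular color Hom-Lie algebra.

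First I would unpack what it means for $\rho$ to be a morphism of color Hom-Lie algebras into this target. This requires two ingredients: the intertwining relation $\rho \circ \alpha = Ad_{\beta} \circ \rho$, which explicitly reads $\rho(\alpha(x)) = \beta \rho(x) \beta^{-1}$ for every homogeneous $x$, together with the bracket compatibility $\rho([x,y]) = [\rho(x),\rho(y)]_{\beta}$. Using the definition of $[\cdot,\cdot]_{\beta}$, the latter becomes
$$\rho([x,y]) = \beta \rho(x) \beta^{-1} \rho(y) \beta^{-1} - \varepsilon(x,y)\,\beta \rho(y) \beta^{-1} \rho(x) \beta^{-1}.$$

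For the direction ``morphism implies representation,'' I would post-compose this identity on the right by $\beta$ to cancel the trailing $\beta^{-1}$ factor, obtaining
$$\rho([x,y]) \circ \beta = \beta \rho(x) \beta^{-1} \rho(y) - \varepsilon(x,y)\,\beta \rho(y) \beta^{-1} \rho(x),$$
and then apply the intertwining relation to replace each expression $\beta \rho(\cdot) \beta^{-1}$ by $\rho(\alpha(\cdot))$. The right-hand side then reads $\rho(\alpha(x)) \rho(y) - \varepsilon(x,y) \rho(\alpha(y)) \rho(x)$, which is precisely the defining identity of a representation with respect to $\beta$. For the converse, I would run the same argument in reverse: starting from the representation identity together with the intertwining relation (the latter being carried along as part of the representation data), substitute $\rho(\alpha(x)) = \beta \rho(x) \beta^{-1}$ on the right-hand side, then post-compose by $\beta^{-1}$ to recover the bracket-compatibility identity, which together with the intertwining relation is exactly the morphism condition.

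The main obstacle is not structural but notational: one must track the commutation factor $\varepsilon(x,y)$ carefully throughout the substitutions (it is preserved because $\beta$, and hence $Ad_{\beta}$, has degree zero, so conjugation does not alter grading) and maintain the correct ordering of the $\beta^{\pm 1}$ factors so that the cancellations are unambiguous. Once the definition of $[\cdot,\cdot]_{\beta}$ has been expanded, the theorem becomes essentially a tautology, with the intertwining condition serving as the precise bridge between the two equivalent formulations.
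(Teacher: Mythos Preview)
Your proposal is correct and follows essentially the same approach as the paper: both proofs pivot on the intertwining relation $\rho(\alpha(x))=\beta\rho(x)\beta^{-1}$ and then pass between the representation identity and the bracket compatibility by multiplying on the right by $\beta^{\pm 1}$. The only cosmetic difference is that the paper begins with the direction ``representation $\Rightarrow$ morphism'' and dismisses the converse as similar, whereas you spell out both directions; the underlying algebra is identical.
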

\begin{proof}
Let $\rho:\mathfrak{g}\rightarrow \mathfrak{gl}(V)$ be a representation of $(\mathfrak{g},[\cdot,\cdot],\alpha)$ on $V$ with respect to $\beta$. One can see that
\begin{equation}\label{haft}
  \rho(\alpha(x))\circ\beta=\beta\circ\rho(x),
\end{equation}
\begin{equation}\label{hasht}
  \rho([x,y])\circ\beta=\rho(\alpha(x))\rho(y) -\varepsilon(x,y)\rho(\alpha(y))\rho(x).
\end{equation}
Using \eqref{haft}, we get that $\rho\circ \alpha=Ad_{\beta}\circ \rho$. Moreover, due to \eqref{haft} and \eqref{hasht},
\begin{align*}
  \rho([x,y])= & \rho(\alpha(x))\beta\circ \beta^{-1}\rho(y)\beta^{-1} -\varepsilon(x,y)\rho(\alpha(y))\beta\circ\beta^{-1}\rho(x)\beta^{-1} \\
  = & \beta \rho(x)\beta^{-1}\rho(y)\beta^{-1} -\varepsilon(x,y)\beta\rho(y)\beta^{-1}\rho(x)\beta^{-1} \\
  = & [\rho(x),\rho(y)]_{\beta}.
\end{align*}
Hence, $\rho$ is a morphism of color Hom-Lie algebras. The converse is shown easily in a similar way.
\end{proof}

\begin{cor}
  Let $(\mathfrak{g},[\cdot,\cdot],\alpha)$ be a regular color Hom-Lie algebra. Then the adjoint representation $ad:\mathfrak{g}\rightarrow \mathfrak{gl}(\mathfrak{g})$ is a morphism from $(\mathfrak{g},[\cdot,\cdot],\alpha)$ to $(\mathfrak{gl}(g),[\cdot,\cdot]_{\alpha},Ad_{\alpha})$.
\end{cor}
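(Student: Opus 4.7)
The plan is to derive this corollary as a direct consequence of the theorem proved immediately above. That theorem characterizes representations $\rho:\mathfrak{g}\to \mathfrak{gl}(V)$ with respect to an even linear map $\beta\in\mathfrak{gl}(V)_{\bar 0}$ as exactly the morphisms of color Hom-Lie algebras from $(\mathfrak{g},[\cdot,\cdot],\alpha)$ to $(\mathfrak{gl}(V),[\cdot,\cdot]_{\beta},Ad_{\beta})$. Thus, once we know that $ad$ is a representation of $\mathfrak{g}$ on itself with respect to some distinguished $\beta$, the conclusion is immediate.

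First I would specialize the setting: take $V=\mathfrak{g}$ and $\beta=\alpha$. The hypothesis that $\mathfrak{g}$ is \emph{regular} is exactly what is needed here, since the target bracket $[\cdot,\cdot]_{\alpha}$ and the twisting map $Ad_{\alpha}(x)=\alpha x \alpha^{-1}$ both require $\alpha$ to be invertible. Under this specialization the theorem reads: $\rho=ad$ is a representation of $(\mathfrak{g},[\cdot,\cdot],\alpha)$ on $\mathfrak{g}$ with respect to $\alpha$ if and only if $ad:(\mathfrak{g},[\cdot,\cdot],\alpha)\to(\mathfrak{gl}(\mathfrak{g}),[\cdot,\cdot]_{\alpha},Ad_{\alpha})$ is a morphism of color Hom-Lie algebras.

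Next I would invoke the already established fact, recalled earlier in the section from \cite{AbdaouiAmmarMakhloufCohhomLiecolalg2015,AmmarEjbehiMakhlouf:homdeformation}, that $(\mathfrak{g},ad,\alpha)$ is the adjoint representation of the color Hom-Lie algebra $\mathfrak{g}$, that is, $ad$ satisfies the defining identity
\[
ad_{[x,y]}\circ \alpha = ad_{\alpha(x)}\circ ad_y - \varepsilon(x,y)\,ad_{\alpha(y)}\circ ad_x
\]
for all homogeneous $x,y\in\mathfrak{g}$. Feeding this into the theorem just recalled, we conclude that $ad$ is a morphism of color Hom-Lie algebras into $(\mathfrak{gl}(\mathfrak{g}),[\cdot,\cdot]_{\alpha},Ad_{\alpha})$, which is exactly the claim.

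I do not expect any genuine obstacle: the entire argument is an application of the previous theorem combined with the known fact that the adjoint map is a representation in the sense of this paper. The only point worth flagging is where the regularity hypothesis is used; this appears precisely in the definition of the target bracket $[\cdot,\cdot]_{\alpha}$ and of $Ad_{\alpha}$, both of which were shown in the preceding proposition to give a well-defined regular color Hom-Lie algebra structure on $\mathfrak{gl}(\mathfrak{g})$.
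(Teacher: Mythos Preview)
Your proposal is correct and matches the paper's intended approach: the corollary is stated in the paper without proof, precisely because it follows immediately from the preceding theorem applied with $V=\mathfrak{g}$, $\beta=\alpha$, and $\rho=ad$, together with the already recalled fact that $(\mathfrak{g},ad,\alpha)$ is a representation. Your identification of where the regularity hypothesis enters (the well-definedness of $[\cdot,\cdot]_{\alpha}$ and $Ad_{\alpha}$) is also appropriate.
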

Let $\rho:\mathfrak{g}\rightarrow\mathfrak{gl}(V)$ be a representation of $(\mathfrak{g},[\cdot,\cdot],\alpha)$ on $V$ with respect to $\beta\in \mathfrak{gl}(V)_{\bar{0}}$. Denote by $C^{k}(\mathfrak{g};V)$, the set of all $k$-cochains on $\mathfrak{g}$, i.e. all $k$-linear homogeneous maps
$$\varphi:\bigwedge^{n}(\mathfrak{g})\rightarrow V,$$
satisfying
$\varphi(x_{1},\ldots,x_{i},x_{i+1},\ldots,x_{k}) = -\varepsilon(x_{i},x_{i+1}) \varphi(x_{1},\ldots, x_{i+1},x_{i},\ldots,x_{k}).$

Define
$$C^{n}(\mathfrak{g};V)_{\theta}=\{\varphi\in C^{n}(\mathfrak{g};V): |\varphi(x_{1},\ldots,x_{n})|= |x_{1}|+\ldots+|x_{n}|+\theta\},$$
for all $\theta\in\Gamma$.

If $\beta\in \mathfrak{gl}(V)_{\bar{0}}$, we define $\bar{\beta}$ from $C^{k}(\mathfrak{g};V)$ to itself using the $k$-cochains
\begin{equation}\label{noh}
\bar{\beta}(\varphi)(x_{1},\ldots,x_{k})=\beta\circ \varphi(x_{1},\ldots,x_{k}),
\end{equation}
for all $\varphi\in C^{n}(\mathfrak{g};V)$.
Moreover, using $\alpha$, one can define
$$\bar{\alpha}:C^{k}(\mathfrak{g};V)\rightarrow C^{k}(\mathfrak{g};V)$$
\begin{equation}\label{dah}
\bar{\alpha}(\varphi)(x_{1},\ldots,x_{k}) =\varphi(\alpha(x_{1}),\ldots,\alpha(x_{k})),
\end{equation}
for all $\varphi\in C^{n}(\mathfrak{g};V)$.

\begin{defn}
A $k$-Hom-cochain on $\mathfrak{g}$ with values in $V$ is a $k$-cochain $\varphi\in C^{k}(\mathfrak{g};V)$ such that
$\bar{\alpha}(\varphi)=\bar{\beta}(\varphi).$
\end{defn}
The set of all $k$-Hom-cochains on $\mathfrak{g}$ with values in $V$ is denoted by $C^{k}_{\alpha,\beta}(\mathfrak{g};V)$.
The action $\bullet:C^{l}_{\alpha}(\mathfrak{g};V)\times C^{k}_{\alpha,\beta}(\mathfrak{g};V) \rightarrow C^{k+l}_{\alpha,\beta}(\mathfrak{g};V)$ is defined as follows.
For $l=1$,
$$\eta\bullet\varphi(x_{1},\ldots,x_{k+1})=\sum_{i}sgn(i) \eta(x_{i_{1}})\varphi(x_{i_{l+1}},\ldots,x_{i_{k+1}}).$$
For $l\geqslant2$,
$$\eta\bullet\varphi(x_{1},\ldots,x_{k+1})=  \sum_{i}sgn(i) \kappa_{j}(x)\eta(x_{i_{1}},\ldots ,x_{i_{l}} ) \varphi(x_{i_{l+1}},\ldots,x_{i_{k+1}}),$$
for all $\eta \in C^{l}_{\alpha}(\mathfrak{g};V)$, $\varphi\in C^{k}_{\alpha,\beta}(\mathfrak{g};V)$, where
$$\lambda_{j}(x)= \varepsilon((\sum_{j=1}^{l}|x_{1}|+\ldots+|x_{k+1}|),|x_{j}|) (-1)^{\sum_{1\leq p\neq q\leq l}|x_{p}||x_{q}|},$$ and the summation is taken over $(l,k)$-shuffles.

The linear map $d_{s}:C^{k}_{\alpha,\beta}(\mathfrak{g};V) \rightarrow C^{k+1}(\mathfrak{g};V)$ is defined as follows
\begin{align*}
d_{s}\varphi(x_{1},\ldots,x_{k+1}) &= \sum_{i=1}^{k+1} \kappa_{j}(x)\rho(\alpha^{s+k}(x_{i})) \varphi(x_{1},\ldots,\hat{x_{i}},\ldots,x_{k+1}) \\
 & + \sum_{i<j} (-1)^{i}\kappa_{ji}(x) \varphi([x_{i},x_{j}],\alpha(x_{1}),\ldots,\hat{x_{i}},\ldots, \hat{x_{j}},\ldots,\alpha(x_{k+1})), \\
\kappa_{j}(x)  & =  \varepsilon(x_{1}+\dots+x_{j-1},x_{j}),\quad
\kappa_{ji}(x) =  \varepsilon(x_{j+1}+\dots+x_{i-1},x_{i}).
\end{align*}
It is shown in \cite{AbdaouiAmmarMakhloufCohhomLiecolalg2015,AmmarEjbehiMakhlouf:homdeformation,ArmakanSilvFarh:exthomLiecoloralg} that $d_{s}$ is a well-defined coboundry operator. Moreover, it can be easily checked that
$$\beta\circ d_{s}=d_{s+1}\circ \bar{\beta}.$$
It is also shown that $d_{s}$ is an $\alpha^{l}$ derivation in the sense that
$$d_{s}(\eta\bullet\varphi)=d\eta\bullet\bar{\alpha}(\varphi)+ \varepsilon(s,l)\eta\bullet d^{s+l}\varphi.$$

Let $(\mathfrak{g},[\cdot,\cdot],\alpha)$ be a color Hom-Lie algebra. Denote by $C^{k}_{\alpha}(\mathfrak{g})$ the set of all $\xi\in \bigwedge^{k}\mathfrak{g}^{*}$ for which $\alpha^{*}\xi=\xi$.
Then the complex $(\bigoplus_{k}C^{k}_{\alpha}(\mathfrak{g}),d)$ is a subcomplex of $(\bigwedge \mathfrak{g}^{*},d)$, where $(\bigwedge \mathfrak{g}^{*},\wedge)$ is the exterior Hom-algebra. This complex is considered to be the cohomological complex of $\mathfrak{g}$ in\cite{AbdaouiAmmarMakhloufCohhomLiecolalg2015,ArmakanSilvFarh:exthomLiecoloralg}.

Let $\rho:\mathfrak{g}\rightarrow \mathfrak{gl}(V)$ be a representation of the color Hom-Lie algebra $(\mathfrak{g},[\cdot,\cdot],\alpha)$ on the $\Gamma$-graded vector space $V$ with respect to $\beta\in \mathfrak{gl}(V)_{\bar{0}}$. Denote by $C^{k}(\mathfrak{g},V)$, The set of $k$-cochains on $\mathfrak{g}$ with values in $V$. Therefore, $C^{k}(\mathfrak{g},V)$ is spanned by all $k$-linear homogenous maps $\varphi:\Lambda^{k}\mathfrak{g}\rightarrow V$ for which one has
$$\varphi(x_{1},\ldots,x_{i},x_{i+1},\ldots,x_{k}) =\varepsilon(x_{i+1},x_{i}) \varphi(x_{1},\ldots,x_{i+1},x_{i},\ldots,x_{k}). $$

\section{Color Omni-Hom-Lie algebras}
\label{sec:colorOmniHomLiealg}

\begin{defn}
  Let $\mathfrak{g}$ be a $\Gamma$-graded vector space and $\beta\in \mathfrak{gl}(\mathfrak{g})_{\bar{0}}$. An color omni-Hom-Lie algebra is a quadruple $(\mathfrak{gl}(\mathfrak{g})\otimes \mathfrak{g}, \delta_{\beta}, \{\cdot,\cdot\}_{\beta},\langle\cdot,\cdot\rangle)$, where
  $$\delta_{\beta}: \mathfrak{gl}(\mathfrak{g})\otimes \mathfrak{g} \rightarrow \mathfrak{gl}(\mathfrak{g})\otimes \mathfrak{g},$$
  is an even linear map satisfying
  $$\delta_{\beta}(A+x)=Ad_{\beta}(A)+\beta(x),$$
  for all $A+x\in \mathfrak{gl}(\mathfrak{g})\otimes \mathfrak{g}$,
  $$\{\cdot,\cdot\}_{\beta}: \mathfrak{gl}(\mathfrak{g})\otimes \mathfrak{g} \times \mathfrak{gl}(\mathfrak{g})\otimes \mathfrak{g}\rightarrow \mathfrak{gl}(\mathfrak{g})\otimes \mathfrak{g},$$
  is a bilinear map satisfying
  $$\{A+x,B+y\}_{\beta}=[x,y]_{\beta}+A(y),$$
  for all $A+x, B+y\in \mathfrak{gl}(\mathfrak{g})\otimes \mathfrak{g}$,
  and
  $$\langle\cdot,\cdot\rangle:\mathfrak{gl}(\mathfrak{g})\otimes \mathfrak{g} \times \mathfrak{gl}(\mathfrak{g})\otimes \mathfrak{g}\rightarrow \mathfrak{g},$$
  is a supersymmetric bilinear $V$-valued pairing given by
  $$\langle A+x,B+y\rangle = \frac{1}{2}(A(y) - \varepsilon(x,y)B(x)).$$
\end{defn}

\begin{defn}
A Hom-associative color algebra is a triple $(V,\mu,\alpha)$ consisting of a color space $V$, an even bilinear map $\mu:V\times V\rightarrow V$ and an even homomorphism $\alpha:V\rightarrow V$ satisfying
$$\mu(\alpha(x),\mu(y,z))=\mu(\mu(x,y),\alpha(z)),$$
for all $x,y,z \in V$.
\end{defn}

\begin{prop} \label{prop:deltabetacolorHomLeibniz}
The following properties hold:
  \begin{enumerate}[label=\upshape{(\roman*)},left=7pt]
    \item \label{prop:deltabetacolorHomLeibniz:i} $\delta_{\beta}$ is an algebra automorphism.
    \item \label{prop:deltabetacolorHomLeibniz:ii} $(\mathfrak{gl}(V)\oplus V,\{\cdot,\cdot\}_{\beta},\delta_{\beta})$ is a color Hom-Leibniz algebra. Moreover, we have
        $$\beta\left<A+u,B+v\right>= \left<\delta_{\beta}(A+u),\delta_{\beta}(B+v)\right>.$$
  \end{enumerate}
\end{prop}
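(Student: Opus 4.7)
The plan is to verify each assertion by a direct calculation, leveraging the fact, established in the proposition preceding Theorem~3.2, that $(\mathfrak{gl}(V),[\cdot,\cdot]_\beta,Ad_\beta)$ is a regular color Hom-Lie algebra. Throughout I work with homogeneous $X=A+x$, $Y=B+y$, $Z=C+z$, taking $A,x$ to share a common degree so that $\varepsilon(X,Y)=\varepsilon(A,B)=\varepsilon(x,y)$, as is standard in the color setting.

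For part~\ref{prop:deltabetacolorHomLeibniz:i}, the first step is to observe that $\delta_\beta$ is invertible: $Ad_\beta$ is invertible since $Ad_\beta\circ Ad_{\beta^{-1}}=\id$, and $\beta$ is invertible by hypothesis. It then remains to check that $\delta_\beta$ intertwines $\{\cdot,\cdot\}_\beta$. The $\mathfrak{gl}(V)$-projection of $\delta_\beta(\{X,Y\}_\beta)=\{\delta_\beta X,\delta_\beta Y\}_\beta$ amounts to $Ad_\beta([A,B]_\beta)=[Ad_\beta(A),Ad_\beta(B)]_\beta$, already shown in the cited proposition. The $V$-projection reduces to $\beta(A(y))=Ad_\beta(A)(\beta(y))$, which is immediate from $Ad_\beta(A)=\beta A\beta^{-1}$.

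For part~\ref{prop:deltabetacolorHomLeibniz:ii}, the central task is the color Hom-Leibniz identity
\[
\{\delta_\beta(X),\{Y,Z\}_\beta\}_\beta
= \{\{X,Y\}_\beta,\delta_\beta(Z)\}_\beta
+\varepsilon(X,Y)\{\delta_\beta(Y),\{X,Z\}_\beta\}_\beta.
\]
Projecting onto $\mathfrak{gl}(V)$, the identity depends only on $A,B,C$ and becomes
\[
[Ad_\beta(A),[B,C]_\beta]_\beta
=[[A,B]_\beta,Ad_\beta(C)]_\beta
+\varepsilon(A,B)\,[Ad_\beta(B),[A,C]_\beta]_\beta,
\]
i.e.\ the Hom-Leibniz identity for $(\mathfrak{gl}(V),[\cdot,\cdot]_\beta,Ad_\beta)$, which follows from its $\varepsilon$-Hom-Jacobi identity combined with $\varepsilon$-skew-symmetry (a one-line manipulation using $\varepsilon(\alpha,\beta)\varepsilon(\beta,\alpha)=1$). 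The $V$-projection reduces to
\[
Ad_\beta(A)(B(z))
=[A,B]_\beta(\beta(z))+\varepsilon(A,B)\,Ad_\beta(B)(A(z)),
\]
which is nothing but the definition of $[A,B]_\beta$ evaluated on $\beta(z)$ and rearranged. Finally, the identity $\beta\langle A+u,B+v\rangle=\langle\delta_\beta(A+u),\delta_\beta(B+v)\rangle$ is a one-line verification: expand the pairing, use $\beta A=Ad_\beta(A)\,\beta$, and note that $\varepsilon(\beta(x),\beta(y))=\varepsilon(x,y)$ because $\beta$ is of degree zero.

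The main obstacle, and the source of any possible sign mistakes, is the $\varepsilon$-bookkeeping in the $V$-part of the Leibniz identity: the cancellation closes only because the color grading convention forces $\varepsilon(X,Y)=\varepsilon(A,B)$, matching the $\varepsilon(A,B)$ that appears when one expands $[A,B]_\beta$ acting on $\beta(z)$. Once this identification is made, both halves of the identity reduce to the claims above, and the proof becomes routine bookkeeping plus the earlier structural result on $(\mathfrak{gl}(V),[\cdot,\cdot]_\beta,Ad_\beta)$.
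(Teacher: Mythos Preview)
Your proposal is correct and follows essentially the same route as the paper: a direct computation that reduces everything to the already-established color Hom-Lie structure on $(\mathfrak{gl}(V),[\cdot,\cdot]_\beta,Ad_\beta)$ together with the identity $Ad_\beta(A)\circ\beta=\beta\circ A$. The only cosmetic difference is that you organize the check by projecting to the $\mathfrak{gl}(V)$- and $V$-components separately, whereas the paper expands each of the three bracket terms in full and compares; the underlying verification and the ingredients used are identical.
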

\begin{proof}
  Since $Ad_{\beta}$ is an algebra automorphism,
  \begin{align*}
    \delta_{\beta}(\{A+u,B+v\}_{\beta})=&\delta_{\beta}([A,B]_{\beta} +A(v))= Ad_{\beta}([A,B]_{\beta})+\beta A(v)\\
    = & [Ad_{\beta}(A),_{\beta}(B)]_{\beta}+\beta A(v).
  \end{align*}
  On the other hand,
  \begin{multline*}
    \{\delta_{\beta}(A+u), \delta_{\beta}(B+v)\}_{\beta}=  \{Ad_{\beta}(A)+\beta(u),Ad_{\beta}(B)+\beta(v)\}_{\beta} \\
    =[Ad_{\beta}(A),Ad_{\beta}(B)]_{\beta} +Ad_{\beta}(A)\beta(v)
    =  [Ad_{\beta}(A),Ad_{\beta}(B)]_{\beta}+\beta A(v).
  \end{multline*}
  Therefore, $\delta_{\beta}$ is an algebra automorphism.
Moreover,
\begin{align*}
& \{\delta_{\beta}(A+u),\{B+v,C+w\}_{\beta}\}_{\beta}= \{Ad_{\beta}(A)+\beta(u),[B,C]_{\beta}+B(w)\}_{\beta} \\
& \hspace{1cm}  =  [Ad_{\beta}(A),[B,C]_{\beta}]_{\beta}+Ad_{\beta}(A)B(w)
=  [Ad_{\beta}(A),[B,C]_{\beta}]_{\beta}+\beta A\beta ^{-1}B(w), \\
& \{\{A+u,B+v\}_{\beta},\delta_{\beta}(C+w)\}_{\beta}=  \{[A,B]_{\beta}+A(v),Ad_{\beta}(C)+\beta(w)\}_{\beta} \\
& \hspace{3cm}  = [[A,B]_{\beta},Ad_{\beta}(C)]_{\beta}+  [A,B]_{\beta}\beta(w) \\
& \hspace{3cm}  = [[A,B]_{\beta},Ad_{\beta}(C)]_{\beta}+  \beta A\beta^{-1}B(w) -\varepsilon(A,B)\beta B\beta^{-1}A(w), \\
&\{\delta_{\beta}(B+v),\{A+u,C+w\}_{\beta}\}_{\beta}=  \{Ad_{\beta}(B)+\beta(v),[A,C]_{\beta}+A(w)\}_{\beta} \\
& \hspace{5cm} =  [Ad_{\beta}(B),[A,C]_{\beta}]_{\beta}+Ad_{\beta}(B)A(w) \\
& \hspace{5cm}  =  [Ad_{\beta}(B),[A,C]_{\beta}]_{\beta}+\beta B\beta ^{-1}A(w),
\end{align*}
Since $(\mathfrak{gl}(V),[\cdot,\cdot]_{\beta},Ad_{\beta})$ is a Hom-Leibniz color algebra, \ref{prop:deltabetacolorHomLeibniz:ii} is proved.
Furthermore,
\begin{multline*}
  \langle\delta_{\beta}(A+u),\delta_{\beta}(B+v)\rangle =  \langle Ad_{\beta}(A)+\beta(u),Ad_{\beta}(B)+ \beta(v)\rangle \\
  =  \frac{1}{2}(Ad_{\beta}(A)\beta(v) -\varepsilon(A,B)Ad_{\beta}(B)\beta(u))
  =  \frac{1}{2}\beta(A(v)-\varepsilon(A,B)B(u)) \\
  =  \beta\langle A+u,B+v\rangle.
\end{multline*}
This completes the proof.
\end{proof}

\section{Acknowledgement}
The authors are grateful to Iran National Science Foundation (INSF) for support of the research in this article.


\end{document}